\providecommand{\U}[1]{\protect\rule{.1in}{.1in}}
\newtheorem{theorem}{Theorem}
\newtheorem{corollary}[theorem]{Corollary}
\newtheorem{example}[theorem]{Example}
\newtheorem{lemma}[theorem]{Lemma}
\newtheorem{proposition}[theorem]{Proposition}
\newtheorem{remark}[theorem]{Remark}
\newenvironment{proof}[1][Proof]{\noindent\textbf{#1.} }{\ \rule{0.5em}{0.5em}}
\begin{document}

\begin{center}
{\Huge A Galois correspondence for compact group actions on C*-algebras}

\bigskip

Costel Peligrad\bigskip
\end{center}

Department of Mathematical Sciences, University of Cincinnati, PO Box 210025,
Cincinnati, OH 45221-0025. Email: peligrc@ucmail.uc.edu \vskip10pt

\bigskip

\textbf{Abstract. }In this paper, we prove a Galois\textbf{ }correspondence
for compact group actions on C*-algebras in the presence of a commuting
minimal action. Namely, we show that there is a one to one correspondence
between the C*-subalgebras that are globally invariant under the compact
action and the commuting minimal action, that in addition contain the fixed
point algebra of the compact action and the closed, normal subgroups of the
compact group.

\bigskip

\textit{Key words}: C*-algebra, automorphism group, minimal action.

\textit{Mathematical Subject Classification} (2000): 47L30, 46L40.

\section{\bigskip Introduction}

Let $(M,G,\delta)$ be a W*-dynamical system with $M$ a von Neumann algebra,
$G$ a compact group and $\delta:g\in G\rightarrow\delta_{g}\in
Aut(M\mathcal{)}$ a homomorphism of $G$ into the group $Aut(M\mathcal{)}$ of
all automorphisms of $M$ such that the mapping $g\rightarrow\delta_{g}(m)$ is
simple-weakly continuous for every $m\in M$. Denote by $Aut_{\delta}(M)$ the
subgroup of $Aut(M\mathcal{)}$ consisting of all automorphisms commuting with
all $\delta_{g},g\in G$. In [7], (see also [[15], Theorem 3]) it is proven
that if $Aut_{\delta}(M)$ contains an ergodic subgroup $\mathcal{S}$, then
there is a one to one correspondence between the set of normal, closed
subgroups of $G$ and the set of all $G$ and $\mathcal{S}$ globally invariant
von Neumann subalgebras $N$ with $M^{G}\subset N\subset M$. This
correspondence is given by: $N\longleftrightarrow G^{N}$ where $G^{N}=\left\{
g\in G|\delta_{g}(n)=n,n\in N\right\}  $. The main technical tool in
Kishimoto's approach is the method of Hilbert spaces inside a von Neumann
algebra, as developed in [14]. Later, in [6], the case of irreducible actions
was considered. They proved that if $M^{G}\subset M$ is an irreducible pair of
factors, i.e. $(M^{G})^{\prime}\cap M=\mathcal{C}I$, then there is a one to
one correspondence between the set of intermediate subfactors $M^{G}\subset
N\subset M$ and the closed subgroups of $G$ given by $N\longleftrightarrow
G^{N}$, where, as above, $G^{N}=\left\{  g\in G|\delta_{g}(n)=n,n\in
N\right\}  $. This correspondence is called Galois correspondence. In [6], an
action of a compact group with the property $(M^{G})^{\prime}\cap
M=\mathcal{C}I$ is called minimal. Notice that in this case, $\mathcal{S}%
=\left\{  Ad(u)|u\in M^{G},\text{unitary}\right\}  $ is ergodic on $M$ and
that $N$ is obviously $\mathcal{S}$-invariant but is not required to be
$G-$invariant. This result was extended to the case of compact quantum group
actions on von Neumann factors by Tomatsu [16]. Both papers, [6] and [16] make
extensive use of the method of Hilbert spaces inside a von Neumann algebra and
other methods specific for von Neumann algebras. In this paper, we will prove
a result that extends Kishimoto's result to the case of compact actions on
C*-algebras commuting with minimal actions, as defined below. To the best of
our knowledge, this is the first result of this kind for C*-dynamical systems.
The notion of minimal action that will be used in this paper is different from
the one used in [6]. Our methods are specific to C*-dynamical systems and
give, in paticular, a new proof of Kishimoto's result. We also give an example
that shows that the result is not true if the commutant of the compact action
satisfies a weaker ergodicity condition, that, in the case of von Neumann
algebras is equivalent with the usual one.

\section{Notations and preliminary results}

\subsection{Ergodic actions on C*-algebras}

If $M$ is a von Neumann algebra, a subgroup $\mathcal{S\subset}Aut(M)$ is
called ergodic if $M^{\mathcal{S}}=\mathcal{C}I$, where $\mathcal{C}$ is the
set of complex numbers and $M^{\mathcal{S}}$ denotes the fixed point algebra,
$M^{\mathcal{S}}=\left\{  m\in M|s(m)=m,s\in\mathcal{S}\right\}  $. In the
case of C*-algebras there are several distinct notions of ergodicity that are
all equivalent for von Neumann algebras. These notions are distinct even for
abelian C*-algebras, the case of topological dynamics. Let $A$ be a C*-algebra
and $\mathcal{S}\subset Aut(A)$ a subgroup of the automorphism group of $A$.
Denote by $\mathcal{H}^{\mathcal{S}}(A)$ the set of all non zero hereditary
C*-subalgebras of $A$ that are globally $\mathcal{S}$-invariant. We recall the
following definitions from [9]:

1) $\mathcal{S}$ is called weakly ergodic if $A^{\mathcal{S}}$ is trivial.

2) $\mathcal{S\ }$is called topologically transitive if for every $C_{1}$,
$C_{2}\in\mathcal{H}^{\mathcal{S}}(A)$, their product $C_{1}C_{2}=\left\{
\sum_{finite}c_{1}^{i}c_{2}^{i}|c_{1}^{i}\in C_{1},c_{2}^{i}\in C_{2}\right\}
$ is not zero. In the particular case of topological dynamics this condition
is equivalent to the usual topological transitivity of the flow.

In [2] it is noticed that our condition 2) is equivalent to the following:

2') If $x,y\in A$ are not zero, then there is an $s\in\mathcal{S}$ such that
$xs(y)\neq0$.

3) $\mathcal{S}$ is called minimal if $\mathcal{H}^{\mathcal{S}}(A)=\left\{
A\right\}  $.

We caution that in [6] and [16] the notion of minimality is used for compact
actions $(M,G,\delta)$ such that $(M^{G})^{\prime}\cap M=CI$. In this paper, a
group of automorphisms is called minimal if it satisfies condition 3) above.

Obviously 3)$\Longrightarrow$ 2)$\Longrightarrow$ 1). Also, since in the case
of von Neumann algebras, $M$, the $\mathcal{S}$-invariant, hereditary
W*-subalgebras, are of the form $pMp$ where $p$ is a projection in
$M^{\mathcal{S}}$ it follows that all the above conditions are equivalent.
Another situation when all of the above conditions are equivalent for a
C*-dynamical system is when $\mathcal{S}$ is compact [[9], Corollary 2.7.].

In [9] there are also discussed several criteria for checking topological
transitivity. In [2], a seemingly stronger notion than topological
transitivity is introduced, namely the notion of strong topological transitivity:

$\mathcal{S}$ is said to be strongly topologically transitive if for each
finite sequence $\left\{  (x_{i},y_{i})|i=1,2,...,n\right\}  $ of pairs of
elements $x_{i},y_{i}\in A$ for which $\sum x_{i}\otimes y_{i}\neq0$ in the
algebraic tensor product $A\otimes A$, there exists an $s\in\mathcal{S}$ such
that $\sum x_{i}s(y_{i})\neq0$ in $A$.

Further, in [[3], Theorem 5.2.] it is shown that in the case of von Neumann
algebras strong topological transitivity is equivalent with topological
transitivity and hence with the rest of the above conditions.

In what follows we will need the following results from [1]:

\begin{proposition}
Let $(A,\mathcal{S})$ be a C*-dynamical system and $B\subset A$ an
$\mathcal{S}$-invariant C*-subalgebra. Then $\mathcal{H}^{\mathcal{S}%
}(B)=\left\{  C\cap B|C\in\mathcal{H}^{\mathcal{S}}(A)\right\}  $.
\end{proposition}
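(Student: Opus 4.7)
The plan is to prove both inclusions in the claimed set equality directly.

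For the inclusion $\{C \cap B : C \in \mathcal{H}^{\mathcal{S}}(A)\} \subseteq \mathcal{H}^{\mathcal{S}}(B)$, I would check, whenever $C \cap B \neq 0$, that it is a C*-subalgebra of $B$, $\mathcal{S}$-invariant (since both $C$ and $B$ are), and hereditary in $B$ via the chain $(C \cap B)\, B\, (C \cap B) \subseteq CAC \cap B \subseteq C \cap B$, where the last step uses hereditarity of $C$ in $A$.

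For the reverse inclusion, given $D \in \mathcal{H}^{\mathcal{S}}(B)$, I would define $C := \overline{DAD}$, the norm closure of products $d_1 a d_2$ with $d_i \in D$ and $a \in A$. One checks routinely that $C \in \mathcal{H}^{\mathcal{S}}(A)$: the set $DAD$ is a $*$-closed subalgebra of $A$; it is hereditary in $A$ because $DAD \cdot A \cdot DAD \subseteq DAD$; it is $\mathcal{S}$-invariant because $s(D)=D$ and $s(A)=A$ for every $s \in \mathcal{S}$; and it is nonzero because for any approximate unit $(e_{\lambda}) \subset D$ and every $d \in D$, one has $d = \lim_{\lambda} e_{\lambda} d e_{\lambda} \in \overline{DAD}$, which also shows $D \subseteq C$.

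The crux of the argument is then the identity $C \cap B = D$. The inclusion $D \subseteq C \cap B$ follows from the previous line. Conversely, take $x \in \overline{DAD} \cap B$. A standard $3\varepsilon$-argument, based on the convergence $e_{\lambda} (d_1 a d_2) e_{\lambda} \to d_1 a d_2$ on the generators of $DAD$, shows that $e_{\lambda} x e_{\lambda} \to x$ in norm for every $x \in \overline{DAD}$. Since $x \in B$, the truncated element $e_{\lambda} x e_{\lambda}$ lies in $DBD$, and hereditarity of $D$ inside $B$ gives $DBD \subseteq D$; hence $x \in \overline{D} = D$.

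I expect the only subtle point to be this last step, where the hereditarity hypothesis on $D$ inside $B$ is used in a precise way through the inclusion $DBD \subseteq D$. Without it, $\overline{DAD}$ is still a well-defined $\mathcal{S}$-invariant hereditary subalgebra of $A$, but there is no longer any mechanism to recover $D$ as $C \cap B$, and the argument breaks down.
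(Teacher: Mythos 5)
Your proof is correct. Note, though, that the paper itself gives no argument here: it simply cites Proposition 2.3 of Avitzour's \emph{Noncommutative topological dynamics I}, so your write-up supplies a self-contained proof where the paper defers to a reference. The route you take --- checking $(C\cap B)B(C\cap B)\subseteq CAC\cap B\subseteq C\cap B$ for one inclusion, and for the other passing from $D\in\mathcal{H}^{\mathcal{S}}(B)$ to the hereditary C*-subalgebra $\overline{DAD}$ of $A$ generated by $D$, then recovering $D=\overline{DAD}\cap B$ via an approximate unit $(e_{\lambda})\subset D$ and the inclusion $DBD\subseteq D$ --- is the standard construction and is essentially the argument in the cited source, so there is no genuine methodological divergence. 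Two small points: first, the set of single products $d_1ad_2$ need not be closed under addition, so one should either take the closed linear span or observe (as your approximate-unit computation in fact shows, since $e_{\lambda}(\sum_i d_1^ia_id_2^i)e_{\lambda}$ is again a single product) that the two closures coincide; second, your restriction to the case $C\cap B\neq 0$ is the right reading of the statement, since $\mathcal{H}^{\mathcal{S}}$ consists of \emph{nonzero} hereditary subalgebras and an invariant $C$ with $C\cap B=0$ can certainly exist --- only the inclusion $\mathcal{H}^{\mathcal{S}}(B)\subseteq\left\{C\cap B\,|\,C\in\mathcal{H}^{\mathcal{S}}(A)\right\}$ is used later in the paper (in Lemma 11), and that is exactly the direction your $\overline{DAD}$ construction establishes.
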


\begin{proof}
This is [[1], Proposition 2.3.].
\end{proof}

If $A$ is a C*-algebra, we denote by $A_{sa}$ the set of selfadjoint elements
of $A$ and by $(A_{sa})^{m}$ the set of elements in the bidual $A^{\ast\ast}$
of $A$ that can be obtained as strong limits of bounded, monotone increasing
nets from $A_{sa}$ [see also [10], 3.11]. Then we can state [1]:

\begin{proposition}
Let $(A,\mathcal{S)}$ be a C*-dynamical system. Then the following conditions
are equivalent:\newline i) $(A,\mathcal{S)}$ is minimal\newline ii) If
$a\in(A_{sa})^{m}$ is such that $s^{\ast\ast}(a)=a$ for every $s\in
\mathcal{S}$, then $a\in\mathcal{C}I$. Here, $s^{\ast\ast}$ denotes the double
dual of the automorphism $s\in\mathcal{S}$.
\end{proposition}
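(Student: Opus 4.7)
The plan is to transfer the problem to the bidual $A^{\ast\ast}$ and use the Pedersen--Effros bijection between nonzero hereditary C*-subalgebras $C\subset A$ and nonzero open projections $p\in A^{\ast\ast}$, given by $C=pA^{\ast\ast}p\cap A$ and, conversely, $p=\sup e_{\alpha}$ for any approximate unit $(e_{\alpha})$ of $C$. By construction the open projections are precisely the projections lying in $(A_{sa})^{m}$. The action extends to $A^{\ast\ast}$ via the double duals $s^{\ast\ast}$, and $C$ is $\mathcal{S}$-invariant if and only if $s^{\ast\ast}(p)=p$ for every $s\in\mathcal{S}$; this is the bridge that makes conditions (i) and (ii) two faces of the same statement.

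For the direction (ii)$\Rightarrow$(i) I would take an arbitrary $C\in\mathcal{H}^{\mathcal{S}}(A)$ and let $p\in A^{\ast\ast}$ be the corresponding open projection. Then $p\in(A_{sa})^{m}$ and $s^{\ast\ast}(p)=p$ for every $s$, so (ii) forces $p\in\mathcal{C}I$; being a nonzero projection, $p=I$, and consequently $C=pA^{\ast\ast}p\cap A=A$. Hence $\mathcal{H}^{\mathcal{S}}(A)=\{A\}$.

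For the converse (i)$\Rightarrow$(ii), suppose $a\in(A_{sa})^{m}$ with $s^{\ast\ast}(a)=a$ for all $s$, and argue by contradiction assuming $a\notin\mathcal{C}I$. Since $(A_{sa})^{m}$ is stable under addition of nonnegative multiples of $I$ (shift an approximating net), I may replace $a$ by $a+cI$ with $c>0$ large enough that $a\geq 0$, preserving both $s^{\ast\ast}$-invariance and the fact that $a\notin\mathcal{C}I$. The spectrum of $a$ then contains two values $\mu_{1}<\mu_{2}$; pick $\lambda$ with $0<\mu_{1}<\lambda<\mu_{2}$ (adjusting $c$ if necessary). Writing $a=\sup_{\alpha}a_{\alpha}$ with $(a_{\alpha})\subset A_{sa}$ monotone, the continuous function $f(t)=(t-\lambda)_{+}$ satisfies $f(0)=0$, so $f(a_{\alpha})\in A_{+}$ increases strongly to $(a-\lambda I)_{+}\in(A_{sa})^{m}$. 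By Pedersen's theorem the support projection of a positive element of $(A_{sa})^{m}$ is open, so $p=\chi_{(\lambda,\infty)}(a)$ is an open projection in $A^{\ast\ast}$; the choice of $\lambda$ forces $p\neq 0$ and $p\neq I$; and since $s^{\ast\ast}$ commutes with the bounded Borel functional calculus applied to the fixed element $a$, we obtain $s^{\ast\ast}(p)=p$ for all $s\in\mathcal{S}$. Consequently $C=pA^{\ast\ast}p\cap A$ is a nonzero, proper, $\mathcal{S}$-invariant hereditary C*-subalgebra, contradicting minimality.

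The main technical obstacle is the bookkeeping inside $A^{\ast\ast}$: verifying that $\mathcal{S}$-invariant hereditary subalgebras of $A$ correspond bijectively to $s^{\ast\ast}$-fixed open projections, and that spectral projections $\chi_{(\lambda,\infty)}(a)$ of a self-adjoint $a\in(A_{sa})^{m}$ are indeed open once $\lambda$ is kept above $0$. These are standard within the Pedersen framework cited in the excerpt, but keeping the argument clean in the non-unital setting is the one place where the shift $a\mapsto a+cI$ and the positivity of $\lambda$ really do the work.
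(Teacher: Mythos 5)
The paper offers no argument for this proposition at all: it simply cites Avitzour [1, Proposition 4.1]. So there is no internal proof to match yours against. Your strategy --- translating both conditions into statements about open projections in $A^{\ast\ast}$ via the Pedersen correspondence $C\leftrightarrow p$ between hereditary C*-subalgebras and open projections --- is the natural one, and your direction (ii)$\Rightarrow$(i) is complete and correct: the supremum of an approximate unit of $C\in\mathcal{H}^{\mathcal{S}}(A)$ is a nonzero projection lying in $(A_{sa})^{m}$ and fixed by every $s^{\ast\ast}$, hence equals $I$ by (ii), hence $C=pA^{\ast\ast}p\cap A=A$.

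In (i)$\Rightarrow$(ii) there is one step whose justification fails as written. You assert that $f(a_{\alpha})$, with $f(t)=(t-\lambda)_{+}$, \emph{increases} strongly to $(a-\lambda I)_{+}$. But $t\mapsto(t-\lambda)_{+}$ is not operator monotone, so $a_{\alpha}\leq a_{\beta}$ does not yield $f(a_{\alpha})\leq f(a_{\beta})$, and the net $(f(a_{\alpha}))$ need not be monotone; strong convergence of a bounded net survives continuous functional calculus, but monotonicity does not, so this argument only places $(a-\lambda I)_{+}$ in the strong closure of $A_{+}$, which is no information. The fact you actually need --- that $\chi_{(\lambda,\infty)}(a)$ is an open projection when $a\in(A_{sa})^{m}$, $a\geq0$ and $\lambda\geq0$ --- is true, but it is precisely the content of the semicontinuity machinery in [10, Section 3.11] (combined with the openness of the range projection of any element of $(A_{+})^{m}$), and should be invoked directly rather than rederived by this monotone-net shortcut. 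Once that substitution is made the proof closes: $p=\chi_{(\lambda,\infty)}(a)$ is open, nonzero, proper, and fixed by each $s^{\ast\ast}$ (a normal automorphism fixing $a$ fixes all its spectral projections), so $pA^{\ast\ast}p\cap A$ is a proper element of $\mathcal{H}^{\mathcal{S}}(A)$, contradicting (i). A smaller point in the same direction: when $A$ is non-unital, $a_{\alpha}+cI\notin A$, so the approximating net for $a+cI$ should be $a_{\alpha}+ce_{\mu}$ with $(e_{\mu})$ an increasing approximate unit of $A$; harmless, but it is the honest version of your ``shift an approximating net.''
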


\begin{proof}
This is [[1], Proposition 4.1.].
\end{proof}

\subsection{Compact group actions on C*-algebras}

Let $(A,G,\delta)$ be a C*-dynamical system with $G$ compact. Denote by
$\widehat{G}$ the set of all equivalence classes of irreducible, unitary
representations of $G$. For each $\pi\in\widehat{G}$, fix a unitary
representation, $u^{\pi}$ in the class $\pi$ and a basis in the Hilbert space
$H_{\pi}$ of $u^{\pi}.$ If $\pi\in\widehat{G}$, denote by $\chi_{\pi
}(g)=d_{\pi}\sum_{i=1}^{d_{\pi}}\overline{u_{ii}^{\pi}(g)}$ the character of
the class $\pi$, where $d_{\pi}$ is the dimension of $H_{\pi}$. For $\pi
\in\widehat{G}$ we consider the following mappings from $A$ into itself:

\begin{center}%
\[
P^{\pi,\delta}(a)=\int_{G}\chi_{\pi}(g)\delta_{g}(a)dg
\]

\[
P_{ij}^{\pi,\delta}(a)=\int_{G}\overline{u_{ji}^{\pi}}(g)\delta_{g}(a)dg
\]

\end{center}

We define the spectral subspaces of the action $\delta$:

\begin{center}%
\[
A_{1}^{\delta}(\pi)=\left\{  a\in A|P^{\pi,\delta}(a)=a\right\}  ,\pi
\in\widehat{G}%
\]

\end{center}

In the particular case when $\pi=\pi_{0}$ is the trivial one dimensional
representation of $G$, $A_{1}^{\delta}(\pi_{0})=A^{G}$ the C*-subalgebra of
fixed elements of the action $\delta$.

As in [8] and [11], the matricial spectral subspaces are defined as follows:

\begin{center}%
\[
A_{2}^{\delta}(u^{\pi})=\left\{  X=[x_{ij}]\in A\otimes B(H_{\pi})|[\delta
_{g}(x_{ij})]=X(1\otimes u^{\pi}(g))\right\}
\]

\end{center}

Notice that $A_{2}^{\delta}(u^{\pi})$ depends on the representation $u^{\pi}$
but for two equivalent representations, $A_{2}^{\delta}(u^{\pi})$ are
spatially isomorphic. Obviously, $A_{2}^{\delta}(u^{\pi})A_{2}^{\delta}%
(u^{\pi})^{\ast}$ is a two sided ideal of $A^{G}\otimes B(H_{\pi})$ and
$A_{2}^{\delta}(u^{\pi})^{\ast}A_{2}^{\delta}(u^{\pi})$ is a two sided ideal
of $(A\otimes B(H_{\pi}))^{\delta\otimes ad(u^{\pi})}$. The proof of the
following remark is straightforward:

\begin{remark}
Let $(A,G,\delta)$ be a C*-dynamical system with $G$ compact and $s\in Aut(A)$
be such that $s\delta_{g}=\delta_{g}s$ for every $g\in G$. Then $s(A_{1}%
^{\delta}(\pi))\subset A_{1}^{\delta}(\pi)$ and $(s\otimes\iota)(A_{2}%
^{\delta}(u^{\pi}))\subset A_{2}^{\delta}(u^{\pi})$ for every $\pi\in
\widehat{G}$. Here $\iota$ stands for the identity automorphism of $B(H_{\pi
})$.
\end{remark}

We will use the following results from [11]:

\begin{lemma}
i) $\sum_{\pi\in\widehat{G}}A_{1}^{\delta}(\pi)$ is dense in $A$\newline ii)
$A_{2}^{\delta}(u^{\pi})=\left\{  [P_{ij}^{\pi,\delta}(a)]|a\in A\right\}  $
\end{lemma}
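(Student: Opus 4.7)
My plan is to base everything on two ingredients: the Schur orthogonality relations for the matrix coefficients $u^{\pi}_{ij}$ of irreducible representations of $G$, and Peter--Weyl density of trigonometric polynomials in $C(G)$. Part (ii) reduces to a direct bookkeeping computation, and once the projectors $P_{ij}^{\pi,\delta}$ have been analyzed there, part (i) will fall out by a standard approximation argument.

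For (ii), I would begin with the easier inclusion $\supset$: using translation invariance of Haar measure and the identity $u^\pi_{ji}(h^{-1}g) = \sum_k \overline{u^\pi_{kj}(h)}\,u^\pi_{ki}(g)$ (which follows from $u^\pi(h^{-1}) = u^\pi(h)^{\ast}$), a short calculation yields $\delta_h(P_{ij}^{\pi,\delta}(a)) = \sum_k u^\pi_{kj}(h)\,P_{ik}^{\pi,\delta}(a)$, which is exactly the $(i,j)$ entry of $[P_{ij}^{\pi,\delta}(a)](1\otimes u^\pi(h))$. For the reverse inclusion, I would take $X=[x_{ij}]\in A_2^\delta(u^\pi)$, rewrite the defining covariance as $\delta_g(x_{mn}) = \sum_k x_{mk}\,u^\pi_{kn}(g)$, substitute into the definition of $P_{ij}^{\pi,\delta}(x_{mn})$, and apply the Schur orthogonality $\int_G \overline{u^\pi_{ji}(g)}u^\pi_{kn}(g)\,dg = d_\pi^{-1}\delta_{jk}\delta_{in}$. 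This produces $P_{ij}^{\pi,\delta}(x_{mn}) = d_\pi^{-1}x_{mj}\delta_{in}$, so the element $a = d_\pi\sum_k x_{kk}$, the unnormalized matrix trace of $X$, will satisfy $P_{ij}^{\pi,\delta}(a)=x_{ij}$ for all $i,j$ and witness membership in the right-hand side.

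For (i), the same orthogonality computation together with $\chi_\pi(g)=d_\pi\sum_l\overline{u^\pi_{ll}(g)}$ gives $P^{\pi,\delta}(P_{ij}^{\pi,\delta}(a)) = P_{ij}^{\pi,\delta}(a)$, so every $P_{ij}^{\pi,\delta}(a)$ lies in $A_1^\delta(\pi)$. To approximate a given $a\in A$ by such elements, I would pick a nonnegative continuous bump $\rho_n\in C(G)$ with $\int_G \rho_n\,dg=1$ and support shrinking to $\{e\}$, so that $\int_G \rho_n(g)\delta_g(a)\,dg\to a$ in norm by the norm continuity of $g\mapsto\delta_g(a)$. By Peter--Weyl, finite linear combinations of the functions $\overline{u^\pi_{ji}}$ are uniformly dense in $C(G)$ (the space of matrix coefficients is closed under complex conjugation), so I can choose $f_n(g) = \sum c^{(n)}_{\pi,i,j}\,\overline{u^\pi_{ji}(g)}$ with $\|f_n-\rho_n\|_\infty<1/n$. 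Then $\|\int_G f_n(g)\delta_g(a)\,dg - \int_G \rho_n(g)\delta_g(a)\,dg\|\le \|a\|/n$, while at the same time $\int_G f_n(g)\delta_g(a)\,dg = \sum c^{(n)}_{\pi,i,j}\,P_{ij}^{\pi,\delta}(a)\in \sum_\pi A_1^\delta(\pi)$, giving density.

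The only real obstacle is the bookkeeping of indices and careful use of the unitarity relation $u^\pi(g^{-1}) = u^\pi(g)^{\ast}$; there is no deeper idea involved, since Peter--Weyl carries out all the analytic work and Schur orthogonality does all the algebra.
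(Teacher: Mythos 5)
Your proof is correct. Note, however, that the paper itself offers no argument here: it simply cites [11, Lemma 2.3] for (i) and [11, Lemma 2.2] for (ii), so there is nothing in the text to compare against except the reference. What you supply is the standard self-contained proof, and all the computations check out: the change of variables $g\mapsto h^{-1}g$ together with $u^{\pi}(h^{-1})=u^{\pi}(h)^{\ast}$ gives $\delta_h(P_{ij}^{\pi,\delta}(a))=\sum_k P_{ik}^{\pi,\delta}(a)u^{\pi}_{kj}(h)$, which is exactly the covariance defining $A_2^{\delta}(u^{\pi})$; Schur orthogonality gives $P_{ij}^{\pi,\delta}(x_{mn})=d_{\pi}^{-1}x_{mj}\delta_{in}$, so $a=d_{\pi}\sum_k x_{kk}$ recovers $X$; and the idempotence $P^{\pi,\delta}\circ P_{ij}^{\pi,\delta}=P_{ij}^{\pi,\delta}$ plus the Peter--Weyl approximation of a shrinking bump by trigonometric polynomials yields density. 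The only points worth flagging as needing explicit mention are the ones you already handle in passing: norm continuity of $g\mapsto\delta_g(a)$ (part of the definition of a C*-dynamical system) to get $\int_G\rho_n(g)\delta_g(a)\,dg\to a$, and the fact that the span of the functions $\overline{u^{\pi}_{ji}}$ is all of the trigonometric polynomials because $\widehat{G}$ is stable under conjugation.
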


\begin{proof}
i) This is [[11], Lemma 2.3.]

ii) This is [[11], Lemma 2.2.]
\end{proof}

We also need the following known result:

\begin{lemma}
Let $(C,G,\delta)$ be a C*-dynamical system with $G$ compact. Then every
approximate unit of the fixed point algebra $C^{G}$ is an approximate unit of
$C$.
\end{lemma}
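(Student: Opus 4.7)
My plan is to reduce the lemma to the density statements
\[
C = \overline{C^{G} \cdot C} \quad \text{and} \quad C = \overline{C \cdot C^{G}}.
\]
Granting these, a routine $\varepsilon/3$ argument finishes: for $c \in C$, approximate $c$ by a finite sum $\sum_{i} b_{i} c_{i}$ with $b_{i} \in C^{G}$ and $c_{i} \in C$; since $\|b_{i} - e_{\lambda} b_{i}\| \to 0$ for each fixed $i$, it follows that $\|c - e_{\lambda} c\| \to 0$, and the symmetric computation handles $\|c - c e_{\lambda}\|$.

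To prove the first density, I invoke Lemma 4(i) to reduce to showing $C_{1}^{\delta}(\pi) \subset \overline{C^{G} \cdot C}$ for every $\pi \in \widehat{G}$. Fix $a \in C_{1}^{\delta}(\pi)$ and, using Lemma 4(ii), form the matricial element $X = [P_{ij}^{\pi,\delta}(a)] \in C_{2}^{\delta}(u^{\pi})$. The defining relation $(\delta_{g} \otimes \iota)(X) = X(1 \otimes u^{\pi}(g))$, combined with unitarity of $u^{\pi}(g)$, yields $(\delta_{g} \otimes \iota)(XX^{*}) = XX^{*}$, so every entry of $XX^{*}$ lies in $C^{G}$, and hence $(XX^{*})^{1/n}$ has all entries in $C^{G}$ for each $n \geq 1$. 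A continuous functional calculus computation in $M_{d_{\pi}}(C)$ gives
\[
\|X - (XX^{*})^{1/n} X\|^{2} \;=\; \bigl\|\, (XX^{*})\bigl(1 - (XX^{*})^{1/n}\bigr)^{2} \,\bigr\| \;=\; \sup_{t \in \sigma(XX^{*})} t(1 - t^{1/n})^{2},
\]
and $g_{n}(t) = t(1 - t^{1/n})^{2}$ converges to $0$ uniformly on $[0, \|XX^{*}\|]$ (split at a small $t_{0}$: on $[0, t_{0}]$ one has $g_{n} \leq t_{0}$, while on $[t_{0}, \|XX^{*}\|]$ one has $g_{n} \leq \|XX^{*}\|(1 - t_{0}^{1/n})^{2} \to 0$). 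Consequently every entry $X_{ij}$ lies in $\overline{C^{G} \cdot C}$. Substituting $\chi_{\pi}(g) = d_{\pi} \sum_{i} \overline{u_{ii}^{\pi}(g)}$ into $a = P^{\pi,\delta}(a)$ produces $a = d_{\pi} \sum_{i} X_{ii}$, whence $a \in \overline{C^{G} \cdot C}$, as required.

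The mirror density $C = \overline{C \cdot C^{G}}$ follows by applying the identical argument to $a^{*} \in C_{1}^{\delta}(\bar{\pi})$, or equivalently by taking adjoints in $X = \lim_{n}(XX^{*})^{1/n} X$ to obtain $X^{*} = \lim_{n} X^{*}(XX^{*})^{1/n}$. The only slightly delicate step I anticipate is the uniform-convergence estimate for $g_{n}$, which is handled by the split above; everything else is Peter--Weyl bookkeeping already assembled in the excerpt.
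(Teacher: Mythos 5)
Your proof is correct, but it is worth noting that the paper does not actually prove this lemma at all: its ``proof'' is a citation to [5, Lemma 2.7], where the statement is established in the more general setting of compact quantum group actions. You instead supply a complete, self-contained argument, and it is a sound one: reducing the claim to the nondegeneracy statements $C=\overline{C^{G}\cdot C}=\overline{C\cdot C^{G}}$, establishing these on each spectral subspace $C_{1}^{\delta}(\pi)$ via the matricial element $X=[P_{ij}^{\pi,\delta}(a)]\in C_{2}^{\delta}(u^{\pi})$, observing that $XX^{*}$ has entries in $C^{G}$ (the paper itself records that $A_{2}^{\delta}(u^{\pi})A_{2}^{\delta}(u^{\pi})^{\ast}$ sits inside $A^{G}\otimes B(H_{\pi})$), and then using the standard Dixmier-type approximation $X=\lim_{n}(XX^{*})^{1/n}X$ together with the identity $a=d_{\pi}\sum_{i}X_{ii}$. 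All the ingredients you use (Lemma 4 (i) and (ii), the paper's normalization of $\chi_{\pi}$) are available in the text, so your route has the virtue of making the paper logically self-contained where it currently outsources the argument. One microscopic imprecision: your bound $g_{n}(t)\le\|XX^{*}\|(1-t_{0}^{1/n})^{2}$ on $[t_{0},\|XX^{*}\|]$ is only literally valid when $\|XX^{*}\|\le 1$; for $t>1$ one should use $(t^{1/n}-1)^{2}\le(\|XX^{*}\|^{1/n}-1)^{2}$, which also tends to $0$, so the uniform convergence (and hence the proof) is unaffected. The $\varepsilon/3$ step implicitly uses that approximate units are norm-bounded by $1$, which is part of the standard definition.
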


\begin{proof}
See for instance [[5], Lemma 2.7.] for the more general case of compact
quantum group actions.
\end{proof}

Finally, we recall that a C*-dynamical system $(A,G,\delta)$ with $G$ compact
is called saturated if the closed, two sided ideal of the crossed product,
$A\rtimes_{\delta}G$, generated by $\chi_{\pi_{0}}$ equals the crossed
product. In this definition we used the known fact that every character,
$\chi_{\pi}$, of $G$ is an element of the multiplier algebra $M(A\rtimes
_{\delta}G)$ of the crossed product [[8], [11]]. If the system is saturated
then, the crossed product is strongly Morita equivalent, in the sense of
Rieffel, with the fixed point algebra, $A^{G}$ [[13], page 236]. Then, we have
[[11], Corollary 3.5.]:

\begin{lemma}
Let $(C,G,\delta)$ be a C*-dynamical system with $G$ compact. Then the
following conditions are equivalent:\newline i) The system is
saturated\newline ii) The two sided ideal $C_{2}^{\delta}(u^{\pi})^{\ast}%
C_{2}^{\delta}(u^{\pi})$ is dense in $(C\otimes B(H_{\pi}))^{\delta\otimes
ad(u^{\pi})}$ for every $\pi\in\widehat{G}$, where $(C\otimes B(H_{\pi
}))^{\delta\otimes ad(u^{\pi})}$ is the fixed point algebra of $C\otimes
B(H_{\pi})$ for the action $\delta\otimes ad(u^{\pi})$ of $G$.
\end{lemma}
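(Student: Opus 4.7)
The plan is to connect the two conditions via the $\widehat{G}$-grading of the crossed product given by the central multiplier projections $\chi_\pi$. Each normalized character acts as a projection $\chi_\pi \in M(C \rtimes_\delta G)$, and the family $\{\chi_\pi\}_{\pi \in \widehat{G}}$ provides a spectral decomposition that translates the global saturation condition into a family of density conditions on spectral components.

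First I would establish, for each $\pi \in \widehat{G}$, a $*$-isomorphism
\[
\chi_\pi (C \rtimes_\delta G) \chi_\pi \cong (C \otimes B(H_\pi))^{\delta \otimes ad(u^\pi)},
\]
built from the correspondence in Lemma 4(ii): matrices $X = [P_{ij}^{\pi,\delta}(a)] \in C_2^\delta(u^\pi)$ encode matrix-coefficient elements of the crossed product via integration against the $u_{ij}^\pi$. Under this identification, the two-sided ideal $C_2^\delta(u^\pi)^* C_2^\delta(u^\pi) \subseteq (C \otimes B(H_\pi))^{\delta \otimes ad(u^\pi)}$ corresponds to the compression $\chi_\pi I_0 \chi_\pi$ of the ideal
\[
I_0 = \overline{(C \rtimes_\delta G)\, \chi_{\pi_0}\, (C \rtimes_\delta G)}.
\]

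Next I would argue that saturation, i.e.\ $I_0 = C \rtimes_\delta G$, is equivalent to the equalities $\chi_\pi I_0 \chi_\pi = \chi_\pi (C \rtimes_\delta G) \chi_\pi$ holding for every $\pi \in \widehat{G}$. The forward direction is immediate from the inclusion $\chi_\pi I_0 \chi_\pi \subseteq \chi_\pi (C \rtimes_\delta G)\chi_\pi$. For the converse, the density of the sum of spectral subspaces supplied by Lemma 4(i) (applied to the natural spectral decomposition of the crossed product) together with Lemma 5, which guarantees that an approximate unit for the $\pi_0$-component serves as an approximate unit for the whole crossed product, lets one reconstruct $C \rtimes_\delta G$ from the compressions $\chi_\pi (C \rtimes_\delta G) \chi_\pi$. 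Combining this equivalence with the translation from the first step yields (i) $\Leftrightarrow$ (ii).

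The main obstacle will be rigorously constructing the $*$-isomorphism in the first step: the multiplication of matrix-coefficient elements in the crossed product must be checked to reproduce ordinary matrix multiplication in $(C \otimes B(H_\pi))^{\delta \otimes ad(u^\pi)}$, and adjoints must align under the chosen indexing. This is a direct computation grounded in the Peter--Weyl orthogonality relations for $u^\pi$, but care is required with the normalization constants $d_\pi$ appearing in the definition of $\chi_\pi$ and with the covariance relation $[\delta_g(x_{ij})] = X(1 \otimes u^\pi(g))$ defining $C_2^\delta(u^\pi)$.
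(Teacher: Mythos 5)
The paper does not prove this lemma at all: it is imported verbatim as Corollary 3.5 of reference [11], so the only available comparison is with the standard argument given there. Your scaffolding matches it: identify the corner $\chi_{\pi}(C\rtimes_{\delta}G)\chi_{\pi}$ with $(C\otimes B(H_{\pi}))^{\delta\otimes ad(u^{\pi})}$ and the off-diagonal piece $\chi_{\pi}(C\rtimes_{\delta}G)\chi_{\pi_{0}}$ with the matricial spectral subspace, so that the compression of $I_{0}=\overline{(C\rtimes_{\delta}G)\chi_{\pi_{0}}(C\rtimes_{\delta}G)}$ to the $\pi$ corner becomes $\overline{C_{2}^{\delta}(u^{\pi})^{\ast}C_{2}^{\delta}(u^{\pi})}$. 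That translation, together with the trivial forward implication, is correct, and you are right that the Peter--Weyl bookkeeping in the first step is where the real work lies (it is the content of [8] and of [11]).

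The converse direction, however, rests on a step that fails as stated. One cannot ``reconstruct $C\rtimes_{\delta}G$ from the compressions $\chi_{\pi}(C\rtimes_{\delta}G)\chi_{\pi}$'': the diagonal corners of an algebra with respect to a family of orthogonal projections never span it (already for $2\times2$ complex matrices with two orthogonal rank one projections, the two diagonal corners give only the diagonal subalgebra), and the off-diagonal blocks $\chi_{\pi}(C\rtimes_{\delta}G)\chi_{\sigma}$, $\pi\neq\sigma$, are not accounted for. The appeal to Lemma 5 is also off target: that lemma concerns approximate units of $C^{G}$ inside $C$, whereas an approximate unit of the corner $\chi_{\pi_{0}}(C\rtimes_{\delta}G)\chi_{\pi_{0}}$ converges strictly to $\chi_{\pi_{0}}\neq1$ and is never an approximate unit of the whole crossed product. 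The correct bridge is the elementary ideal fact: if $\chi_{\pi}(C\rtimes_{\delta}G)\chi_{\pi}\subset I_{0}$ for every $\pi$, then for any $d$ in the crossed product $(\chi_{\pi}d)(\chi_{\pi}d)^{\ast}=\chi_{\pi}dd^{\ast}\chi_{\pi}\in I_{0}$, hence $\chi_{\pi}d\in I_{0}$ because a closed two sided ideal contains every element $x$ with $xx^{\ast}$ in it; since finite sums $\sum_{\pi}\chi_{\pi}d$ are dense in $C\rtimes_{\delta}G$ (this is where the Peter--Weyl decomposition, the crossed-product analogue of Lemma 4 i), actually enters), one concludes $I_{0}=C\rtimes_{\delta}G$. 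With that substitution your outline closes, but as written the reconstruction step is a genuine gap.
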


\section{Galois correspondence}

In this section we will prove our main results and give examples and counterexamples.

Let $(A,G,\delta)$ be a C*-dynamical system with $G$ compact. Let $B\subset A$
be $G$-invariant C*-subalgebra such that $A^{G}\subset B.$

\begin{lemma}
If $(B,G,\delta)$ is saturated and if $\delta|_{B}$ is faithful, then $B=A.$
\end{lemma}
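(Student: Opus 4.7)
The plan is to prove $A_1^\delta(\pi)\subset B$ for every $\pi\in\widehat{G}$; by Lemma 4(i) this gives $A\subset B$ and hence $B=A$. Using Lemma 4(ii) and the identity $P^{\pi,\delta}(a)=d_\pi\sum_i P_{ii}^{\pi,\delta}(a)$ expressing $a=P^{\pi,\delta}(a)$ as $d_\pi$ times the trace of $[P_{ij}^{\pi,\delta}(a)]\in A_2^\delta(u^\pi)$, it suffices to show the matricial inclusion
\[
A_2^\delta(u^\pi)\subset B\otimes B(H_\pi)\quad\text{for every }\pi\in\widehat{G}.
\]

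Fix $\pi$ and $X\in A_2^\delta(u^\pi)$. A direct computation from the covariance identities $[\delta_g(x_{ij})]=X(1\otimes u^\pi(g))$ and $[\delta_g(y_{ij})]=Y(1\otimes u^\pi(g))$ gives $XY^*\in A^G\otimes B(H_\pi)$ for every $Y\in B_2^\delta(u^\pi)$. Since $A^G\subset B$, for any $Y_1,Y_2\in B_2^\delta(u^\pi)$ the product $XY_1^*Y_2=(XY_1^*)Y_2$ lies in $(B\otimes B(H_\pi))\cdot(B\otimes B(H_\pi))\subset B\otimes B(H_\pi)$. Bilinearity and norm continuity extend this to $X\cdot w\in B\otimes B(H_\pi)$ for every $w$ in the norm closure of $B_2^\delta(u^\pi)^{\ast}B_2^\delta(u^\pi)$; the saturation of $(B,G,\delta|_B)$ together with Lemma 7(ii) identifies this closure with $M_B^G:=(B\otimes B(H_\pi))^{\delta\otimes\mathrm{ad}(u^\pi)}$, and hence $X\cdot M_B^G\subset B\otimes B(H_\pi)$.

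To close the argument, I would exhibit elements of $M_B^G$ that approximate the identity on the right of $X$. Let $(e_\alpha)$ be an approximate unit of $A^G$. Then $e_\alpha\otimes 1$ is fixed by $\delta\otimes\mathrm{ad}(u^\pi)$ (since $e_\alpha\in A^G$ and $1$ commutes with every $u^\pi(g)$) and lies in $B\otimes B(H_\pi)$ because $A^G\subset B$, so $e_\alpha\otimes 1\in M_B^G$. By Lemma 5 applied to $(A,G,\delta)$, $(e_\alpha)$ is an approximate unit of $A$, hence the entries $X_{ij}e_\alpha$ of $X(e_\alpha\otimes 1)$ converge to $X_{ij}$ and, as $d_\pi<\infty$, $X(e_\alpha\otimes 1)\to X$ in norm. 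Since each $X(e_\alpha\otimes 1)\in B\otimes B(H_\pi)$ by the previous paragraph, norm-closedness gives $X\in B\otimes B(H_\pi)$, as required.

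The main obstacle is the right-approximation step: a priori one only knows $X\cdot M_B^G\subset B\otimes B(H_\pi)$, and one needs a net $w_\gamma\in M_B^G$ with $Xw_\gamma\to X$. The key point is that the naive candidate $w_\alpha=e_\alpha\otimes 1$ already lies in $M_B^G$ (thanks to $A^G\subset B$) and succeeds by Lemma 5, so no Cohen-style factorization is needed. The faithfulness of $\delta|_B$ does not appear to be used explicitly in the argument above, which is consistent with the observation that for saturated compact actions faithfulness seems automatic: testing Lemma 7(ii) against a $\pi\in\widehat G$ nontrivial on $\ker\delta|_B$ suggests that $\ker\delta|_B$ must be trivial.
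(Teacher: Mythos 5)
Your proof is correct and follows essentially the same route as the paper's: reduce to showing the matricial spectral subspaces of $A$ land in $B$, use saturation (the paper's Lemma 6, which you miscite as Lemma 7(ii)) to identify $\overline{B_{2}^{\delta}(u^{\pi})^{\ast}B_{2}^{\delta}(u^{\pi})}$ with the fixed-point algebra, observe that $XY^{\ast}\in A^{G}\otimes B(H_{\pi})\subset B\otimes B(H_{\pi})$, and approximate the identity on the right using Lemma 5. The only cosmetic difference is that the paper pulls the approximate unit into the dense ideal $B_{2}^{\delta}(u^{\pi})^{\ast}B_{2}^{\delta}(u^{\pi})$ via Dixmier's Proposition 1.7.2 and concludes $X E_{\lambda}\in B_{2}^{\delta}(u^{\pi})$ directly, whereas you extend to the norm closure and use $e_{\alpha}\otimes 1$; your side observation that faithfulness of $\delta|_{B}$ is never invoked is consistent with the paper's own proof.
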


\begin{proof}
By Lemma 4 ii), we have to prove that for every $\pi\in\widehat{G}$,
$A_{2}^{\delta}(u^{\pi})\subset B_{2}^{\delta}(u^{\pi}).$ Let $\pi\in
\widehat{G}$ be arbitrary. Since $(B,G,\delta)$ is saturated, by Lemma 6 we
have $\overline{B_{2}^{\delta}(u^{\pi})^{\ast}B_{2}^{\delta}(u^{\pi}%
)}=(B\otimes B(H_{\pi}))^{\delta\otimes ad(u^{\pi})}.$ Since $(A^{G}\otimes
I_{B(H_{\pi})})\subset(B\otimes B(H_{\pi}))^{\delta\otimes ad(u^{\pi})}$, it
follows from Lemma 5 that $(B\otimes B(H_{\pi}))^{\delta\otimes ad(u^{\pi})}$
contains an approximate unit of $A\otimes B(H_{\pi}).$ Since $B_{2}^{\delta
}(u^{\pi})^{\ast}B_{2}^{\delta}(u^{\pi})$ is a dense two sided ideal of
$(B\otimes B(H_{\pi}))^{\delta\otimes ad(u^{\pi})}$, by [[4], Proposition
1.7.2.], this latter C*-algebra contains an approximate unit $\left\{
E_{\lambda}\right\}  \subset B_{2}^{\delta}(u^{\pi})^{\ast}B_{2}^{\delta
}(u^{\pi})$, $E_{\lambda}=\sum_{i=1}^{n_{\lambda}}X_{i,\lambda}^{\ast
}Y_{i,\lambda}$ where $X_{i,\lambda},Y_{i,\lambda}\in B_{2}^{\delta}(u^{\pi
}).$ Let now $X\in A_{2}^{\delta}(u^{\pi}).$ Then $XE_{\lambda}=\sum
XX_{i,\lambda}^{\ast}Y_{i,\lambda}=\sum(XX_{i,\lambda}^{\ast})Y_{i,\lambda}%
\in(A^{G}\otimes B(H_{\pi}))B_{2}^{\delta}(u^{\pi})=B_{2}^{\delta}(u^{\pi}).$
Since $\left\{  E_{\lambda}\right\}  $ is an approximate unit of $A\otimes
B(H_{\pi}),$ it follows that $X=(norm)\lim(XE_{\lambda})\in B_{2}^{\delta
}(u^{\pi}).$ Therefore $B=A.$
\end{proof}

Let $B$ be a $G$-invariant C*-subalgebra of $A$ such that $A^{G}\subset B$.
Denote $G^{B}=\left\{  g\in G|\delta_{g}(b)=b,b\in B\right\}  $. Then we have:

\begin{remark}
i) $G^{B}$ is a closed, normal subgroup of $G$\newline ii) The quotient action
$\delta^{\bullet}$ of $G/G^{B}$ on $B$ is faithful.
\end{remark}

\begin{proof}
Straightforward.
\end{proof}

\begin{corollary}
Let $(A,G,\delta)$ be a C*-dynamical system. If $A^{G}\subset B\subset A$ and
$B$ is a $G$-invariant C*-subalgebra such that $(B,G/G^{B},\delta^{\cdot})$ is
saturated, where $\delta^{\bullet}$ is the quotient action, then $B=A^{G^{B}%
}.$
\end{corollary}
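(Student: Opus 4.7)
The strategy is to apply Lemma 7 after replacing the ambient algebra $A$ with $A^{G^{B}}$ and the group $G$ with the quotient $G/G^{B}$. I first would set up this replacement carefully. Since $G^{B}$ is a closed, normal subgroup of $G$ by Remark 8 i), the fixed point algebra $A^{G^{B}}$ is globally $\delta$-invariant, so the quotient group $G/G^{B}$ acts on $A^{G^{B}}$ via $\delta^{\bullet}$. Moreover, the definition of $G^{B}$ gives $B \subset A^{G^{B}}$, and since $A^{G} = (A^{G^{B}})^{G/G^{B}}$, the hypothesis $A^{G} \subset B$ gives the containment $(A^{G^{B}})^{G/G^{B}} \subset B \subset A^{G^{B}}$ of the right form for Lemma 7.

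Next, I would check that the data $(A^{G^{B}}, G/G^{B}, \delta^{\bullet})$ together with its $G/G^{B}$-invariant subalgebra $B$ meet the hypotheses of Lemma 7. By assumption, $(B, G/G^{B}, \delta^{\bullet})$ is saturated, so I need only verify that the restriction of the quotient action to $B$ is faithful, which is exactly Remark 8 ii).

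With these verifications in place, Lemma 7 applied to the ambient C*-dynamical system $(A^{G^{B}}, G/G^{B}, \delta^{\bullet})$ and the $G/G^{B}$-invariant subalgebra $B$ yields $B = A^{G^{B}}$, which is the desired conclusion.

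I do not expect any real obstacle: the content of the corollary is essentially that Lemma 7 is invariant under the reduction $(A,G) \leadsto (A^{G^{B}}, G/G^{B})$, so the only work is bookkeeping around the quotient action. The one point to be a little careful about is checking that the fixed point algebra of $\delta^{\bullet}$ on $A^{G^{B}}$ is indeed $A^{G}$, which follows from the definition of the quotient action combined with $A^{G} \subset A^{G^{B}}$.
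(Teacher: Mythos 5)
Your proposal is correct and follows exactly the paper's intended argument: the paper's proof is the one-line instruction ``apply Lemma 7 to $G/G^{B}$ instead of $G$,'' and your write-up simply makes explicit the bookkeeping that this entails (replacing the ambient algebra by $A^{G^{B}}$, identifying $(A^{G^{B}})^{G/G^{B}}$ with $A^{G}$, and invoking Remark 8 for faithfulness). No gaps; this is the same proof, spelled out more carefully than in the paper.
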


\begin{proof}
By Remark 8 ii) the quotient action $\delta^{\bullet}$ of $G/G^{B}$ on $B$ is
faithful and therefore, if we apply Lemma 7 to $G/G^{B}$ instead of $G,$ we
get the desired result.
\end{proof}

\begin{remark}
As we have noticed in the proof of the previous Lemma, if $B$ is a
$G$-invariant C*-subalgebra of $A$ and $\pi\in\widehat{G}$ is such that
$B_{1}^{\delta}(\pi)\neq(0)$ and hence $B_{2}^{\delta}(u^{\pi})\neq(0)$, it
follows that the ideal $\overline{B_{2}^{\delta}(u^{\pi})^{\ast}B_{2}^{\delta
}(u^{\pi})}$ contains an approximate unit $\left\{  E_{\lambda}\right\}  $ of
the form $E_{\lambda}=\sum_{i=1}^{n_{\lambda}}X_{i,\lambda}^{\ast}%
Y_{i,\lambda}$ where $X_{i,\lambda},Y_{i,\lambda}\in B_{2}^{\delta}(u^{\pi})$.
\end{remark}

\begin{lemma}
Let $A$ be a C*-algebra and $B\subset A$ a C*-subalgebra If $\mathcal{S}%
\subset Aut(A)$ acts minimally on $A$ and leaves $B$ globally invariant, then
$\mathcal{S}$ acts minimally on $B$.
\end{lemma}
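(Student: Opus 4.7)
The plan is to observe that this is an essentially immediate consequence of Proposition 1 (Proposition 2.3 of [1]). Recall that a subgroup $\mathcal{S} \subset Aut(A)$ acts minimally on $A$ precisely when $\mathcal{H}^{\mathcal{S}}(A) = \{A\}$, i.e. the only non-zero globally $\mathcal{S}$-invariant hereditary C*-subalgebra of $A$ is $A$ itself, and similarly for $B$.

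So I would first note the hypothesis of Proposition 1 is satisfied: $B$ is an $\mathcal{S}$-invariant C*-subalgebra of $A$. Applying that proposition, every element of $\mathcal{H}^{\mathcal{S}}(B)$ has the form $C \cap B$ for some $C \in \mathcal{H}^{\mathcal{S}}(A)$. Since by the minimality hypothesis $\mathcal{H}^{\mathcal{S}}(A) = \{A\}$, the only candidate on the right is $A \cap B = B$. Hence $\mathcal{H}^{\mathcal{S}}(B) \subseteq \{B\}$. On the other hand $B$ itself is a non-zero hereditary $\mathcal{S}$-invariant C*-subalgebra of $B$, so $B \in \mathcal{H}^{\mathcal{S}}(B)$, and therefore $\mathcal{H}^{\mathcal{S}}(B) = \{B\}$, which is the definition of minimality of $\mathcal{S}$ on $B$.

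There is really no obstacle beyond invoking Proposition 1; all of the substantive content (that intersecting with $B$ yields every $\mathcal{S}$-invariant hereditary subalgebra of $B$) is absorbed into that earlier result. If one wished to make the argument self-contained one would need to redo the work of [1, Prop. 2.3], but under the standing convention that Proposition 1 may be cited, the proof is a one-line set-theoretic application.
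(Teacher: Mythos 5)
Your proposal is correct and follows exactly the paper's own argument: both invoke Proposition 1 to identify every element of $\mathcal{H}^{\mathcal{S}}(B)$ as $C\cap B$ for some $C\in\mathcal{H}^{\mathcal{S}}(A)=\{A\}$, forcing it to equal $B$. Your version is marginally more explicit in noting that $B\in\mathcal{H}^{\mathcal{S}}(B)$, but the substance is the same.
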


\begin{proof}
Indeed, by Proposition 1 every $\mathcal{S}$-invariant hereditary
C*-subalgebra of $B$, $C$, is the intersection of $B$ with an $\mathcal{S}%
$-invariant hereditary subalgebra of $A.$Since $\mathcal{S}$ acts minimally on
$A$ it follows that $C=B$.
\end{proof}

In what follows, we will need the following result from [12]:

\begin{proposition}
Let $(A,G,\delta)$ be a dynamical system with $G$ compact. Assume that the
action $\delta$ is faithful and that there is a subgroup $\mathcal{S}$ of
$Aut_{\delta}(A)$ which acts minimally on $A.$ Then $A_{1}^{\delta}(\pi
)\neq(0)$ for every $\pi\in\widehat{G}$.
\end{proposition}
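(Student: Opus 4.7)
The plan is to argue by contradiction: assume $A_1^\delta(\pi) = 0$ for some $\pi \in \widehat{G}$, and let $\Sigma := \{\sigma \in \widehat{G} : A_1^\delta(\sigma) \neq 0\} \subsetneq \widehat{G}$. The aim is to derive a contradiction by eventually showing $\Sigma = \widehat{G}$.

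First I would collect the consequences of minimality. Since $\mathcal{S}$ commutes with $\delta$, Remark 3 implies each $A_1^\delta(\sigma)$ and each matricial subspace $A_2^\delta(u^\sigma)$ is preserved by $\mathcal{S}$, respectively by $\mathcal{S}\otimes\iota$. Hence the closed two-sided ideal $I_\sigma := \overline{A_2^\delta(u^\sigma) A_2^\delta(u^\sigma)^*}$ of $A^G \otimes B(H_\sigma)$ is $(\mathcal{S}\otimes\iota)$-invariant. Because $A^G$ is $\mathcal{S}$-invariant, Lemma 11 gives that $\mathcal{S}$ is minimal on $A^G$; combined with the fact that every ideal of $A^G \otimes B(H_\sigma)$ is of the form $J \otimes B(H_\sigma)$ for some ideal $J \subset A^G$ (since $H_\sigma$ is finite-dimensional), minimality forces $I_\sigma \in \{0,\ A^G \otimes B(H_\sigma)\}$. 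A standard averaging argument using norm-continuity of $g \mapsto \delta_g(a^*a)$ and positivity shows that $E(a) := \int_G \delta_g(a)\,dg$ is a faithful conditional expectation $A \to A^G$, so $A^G \neq 0$ and $\pi_0 \in \Sigma$. Consequently, for every $\sigma \in \Sigma$ we obtain saturation at $\sigma$: $I_\sigma = A^G \otimes B(H_\sigma)$.

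Next I would study the matrix-coefficient subspace $V := \mathrm{span}\{g \mapsto \phi(\delta_g(a)) : a \in A,\ \phi \in A^*\} \subset C(G)$. Since $\phi \circ \delta_h \in A^*$ for every $h \in G$, $V$ is invariant under both left and right translation on $C(G)$, so by Peter--Weyl it decomposes as $V = \overline{\bigoplus_{\sigma \in \Sigma} V_\sigma}$, where $V_\sigma$ is the span of matrix coefficients of $\sigma$. Faithfulness of $\delta$ implies $V$ separates points of $G$. Combined with $\pi_0 \in \Sigma$ (so $V$ contains the constants) and the identity $A_1^\delta(\bar\sigma) = A_1^\delta(\sigma)^*$ (so $V$ is closed under complex conjugation), the problem reduces via Stone--Weierstrass to showing that $V$ is a subalgebra of $C(G)$: density of $V$ in $C(G)$ would force $\Sigma = \widehat{G}$, contradicting $\pi \notin \Sigma$.

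The subalgebra property is equivalent to $\Sigma$ being closed under taking irreducible components of tensor products, and this is the hardest part. To establish it, I would use the saturation from step one. Given $\pi_1, \pi_2 \in \Sigma$ and an irreducible constituent $\tau$ of $\pi_1 \otimes \pi_2$, fix a Clebsch--Gordan isometry $W : H_\tau \to H_{\pi_1} \otimes H_{\pi_2}$ intertwining $u^\tau$ with the restriction of $u^{\pi_1} \otimes u^{\pi_2}$. For $X \in A_2^\delta(u^{\pi_1})$ and $Y \in A_2^\delta(u^{\pi_2})$, the leg-wise product $X \cdot Y \in A \otimes B(H_{\pi_1} \otimes H_{\pi_2})$ belongs to the matricial spectral subspace for $u^{\pi_1} \otimes u^{\pi_2}$, and sandwiching with $1 \otimes W^*$ yields an element of $A_2^\delta(u^\tau)$. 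The approximate unit in $I_{\pi_1} = A^G \otimes B(H_{\pi_1})$ provided by saturation, applied in the spirit of the argument used in Lemma 7, then allows one to choose $X, Y$ making this extracted element nonzero, yielding $\tau \in \Sigma$. Carefully tracking the Clebsch--Gordan projections onto each isotypic piece and verifying non-vanishing via the saturation approximate unit is the principal technical obstacle of the proof.
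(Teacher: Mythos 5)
The paper does not actually prove this proposition; it is quoted verbatim from [12, Proposition 5.2], so there is no internal argument to compare yours against. Judged on its own terms, your outline follows the standard route (spectrum $\Sigma$ is conjugation-closed, contains $\pi_{0}$, separates points by faithfulness, and would equal $\widehat{G}$ once it is closed under irreducible constituents of tensor products), and steps 1--3 are correct: the reduction via Stone--Weierstrass, the Peter--Weyl decomposition of $V$, and the identification of the tensor-product closure as the crux are all sound. But the crux itself is not proved, and the tool you prepare for it is the wrong one. In step 1 you establish \emph{left} fullness, $\overline{A_{2}^{\delta}(u^{\sigma})A_{2}^{\delta}(u^{\sigma})^{\ast}}=A^{G}\otimes B(H_{\sigma})$, which follows cheaply because ideals of $A^{G}\otimes B(H_{\sigma})$ are $\mathcal{S}\otimes\iota$-invariant hereditary subalgebras and $\mathcal{S}$ is minimal on $A^{G}$. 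However, in the Clebsch--Gordan step the hypothetical vanishing takes the form $X_{12}Y_{13}(1\otimes P_{\tau})=0$ for all $X\in A_{2}^{\delta}(u^{\pi_{1}})$, $Y\in A_{2}^{\delta}(u^{\pi_{2}})$, i.e.\ the unknown element sits in the \emph{right} annihilator of $A_{2}^{\delta}(u^{\pi_{1}})$. To cancel $X_{12}$ there you need $\sum_{i}X_{i}^{\ast}X_{i}$ to run over an approximate unit, i.e.\ fullness of $\overline{A_{2}^{\delta}(u^{\pi_{1}})^{\ast}A_{2}^{\delta}(u^{\pi_{1}})}$ in the fixed-point algebra $(A\otimes B(H_{\pi_{1}}))^{\delta\otimes ad(u^{\pi_{1}})}$ --- the saturation condition of Lemma 6 ii) --- and left fullness does not imply it (the two are opposite ends of an imprimitivity bimodule, and fullness on one side does not transfer).

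Moreover, you cannot obtain this right fullness by the same ideal trick: $\mathcal{S}\otimes\iota$ is \emph{not} minimal on $(A\otimes B(H_{\sigma}))^{\delta\otimes ad(u^{\sigma})}$, nor on $A\otimes B(H_{\sigma})$ (the hereditary subalgebras $A\otimes pB(H_{\sigma})p$ are invariant), so invariant ideals of that fixed-point algebra need not be trivial for soft reasons. This is exactly why the paper's Lemma 13 has to work entry-by-entry with the unit $E$ of the weak closure of $\overline{A_{2}^{\ast}A_{2}}$ in $A^{\ast\ast}\otimes B(H_{\pi})$, use Proposition 2 (minimality characterized via monotone limits in $(A_{sa})^{m}$) to force each matrix entry of $E$ to be scalar, and only then invoke irreducibility of $u^{\pi}$ to conclude $E=I$. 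Your proposal has effectively deferred precisely this hard analytic step, and your own closing sentence concedes that the non-vanishing verification is unproven. The gap is genuine: as stated, the argument does not close, and closing it requires importing the full Lemma 13 machinery (restricted to $\pi\in\Sigma$, where $A_{2}^{\delta}(u^{\pi})\neq 0$ is known, so no circularity arises) rather than the minimality-of-ideals observation you rely on.
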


\begin{proof}
This is [[12], Proposition 5.2.].
\end{proof}

The next lemma provides a class of C*-dynamical systems $(A,G.\delta)$ that
are saturated. We will denote by $Aut_{\delta}(A)$ the subgroup of the group
of all automorphisms of $A$ consisting of all automorphisms that commute with
$\delta_{g}$ for all $g\in G$.

\begin{lemma}
Let $(A,G,\delta)$ be a dynamical system with $G$ compact. Assume that the
action $\delta$ is faithful and that there is a subgroup $\mathcal{S}$ of
$Aut_{\delta}(A)$ which acts minimally on $A.$Then the system is saturated.
\end{lemma}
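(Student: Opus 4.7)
By Lemma~6, saturation of $(A,G,\delta)$ reduces to showing that for every $\pi\in\widehat G$ the closed two-sided ideal $I_\pi:=\overline{A_2^\delta(u^\pi)^*A_2^\delta(u^\pi)}$ is dense in $F_\pi:=(A\otimes B(H_\pi))^{\delta\otimes ad(u^\pi)}$. First, $I_\pi$ is nonzero: Proposition~12 gives $A_1^\delta(\pi)\neq(0)$ for every $\pi$, so by Lemma~4(ii), $A_2^\delta(u^\pi)\neq(0)$ and $I_\pi$ is a nonzero closed two-sided ideal of $F_\pi$. Second, since each $s\in\mathcal S$ commutes with every $\delta_g$, Remark~3 yields $(s\otimes\iota)(A_2^\delta(u^\pi))\subseteq A_2^\delta(u^\pi)$, so $I_\pi$ is globally invariant under $\mathcal S\otimes\iota$.

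The substantive step is to conclude $I_\pi=F_\pi$. My plan is to analyze the central open projection $z\in F_\pi^{**}$ supporting $I_\pi$, i.e., the weak-$*$ limit of a positive approximate unit of $I_\pi$; because $I_\pi$ is $(\mathcal S\otimes\iota)$-invariant, $z$ is fixed by $\mathcal S^{**}\otimes\iota$. Writing $z=[z_{ij}]$ inside $A^{**}\otimes B(H_\pi)=(A\otimes B(H_\pi))^{**}$, each diagonal entry $z_{ii}$ arises as a weak-$*$ limit of the $(i,i)$-entries of a positive increasing approximate unit of $I_\pi$; these are positive elements of $A$ forming an increasing net, so $z_{ii}\in (A_{sa})^m$. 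Since $z_{ii}$ is $\mathcal S^{**}$-invariant, Proposition~2 forces $z_{ii}=t_i\cdot 1_A$ for some $t_i\in[0,1]$. Invariance of $z$ under $\delta^{**}\otimes ad(u^\pi)$ yields the transformation rule
\[
\delta_g^{**}(z_{ij})=\sum_{k,l}\overline{u^\pi(g)_{ki}}\,z_{kl}\,u^\pi(g)_{lj};
\]
integrating over $G$ and applying Schur orthogonality for the irreducible $u^\pi$ forces $t_i=d_\pi^{-1}\sum_k t_k$ for every $i$, so all $t_i$ coincide with a common scalar $t$.

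The main obstacle is to upgrade this to $t=1$, i.e., $z=1_{F_\pi^{**}}$. The projection identity $z=z^2$ yields $\sum_{k\neq i}z_{ik}z_{ik}^*=t(1-t)\cdot 1_A$, so if $0<t<1$ there exist nonzero $\mathcal S^{**}$-invariant off-diagonal entries $z_{ij}\in A^{**}$ whose $\delta^{**}$-covariance is prescribed nontrivially by matrix coefficients of $u^\pi$. These off-diagonal entries need not lie in $(A_{sa})^m$, so Proposition~2 does not apply to them directly; the plan is to apply Proposition~2 to suitable positive self-adjoint combinations (such as the $z_{ij}z_{ij}^*$, once their membership in $(A_{sa})^m$ is verified via the projection structure) and then to combine the resulting scalar constraints with the prescribed $\delta$-spectral behavior and the faithfulness of $\delta$ to rule out $0<t<1$. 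This forces $t=1$, hence $z=1_{F_\pi^{**}}$, and therefore $I_\pi=F_\pi$ for every $\pi$. Handling the off-diagonal terms cleanly is the principal technical difficulty of the argument.
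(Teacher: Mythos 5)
Your reduction via Lemma 6 and Proposition 12, and your treatment of the diagonal entries (increasing approximate unit, $z_{ii}\in(A_{sa})^m$, $\mathcal S^{**}$-invariance, Proposition 2) match the paper's proof exactly. But the step you yourself flag as ``the principal technical difficulty'' --- the off-diagonal entries --- is precisely where your plan has a genuine gap, and the route you sketch for closing it would not work as stated. There is no reason for $z_{ij}z_{ij}^*$ ($i\neq j$) to lie in $(A_{sa})^m$: that class consists of monotone limits of bounded increasing nets from $A_{sa}$ and is not stable under taking products of weak limits of non-selfadjoint entries, so Proposition 2 cannot be brought to bear on $z_{ij}z_{ij}^*$ in the way you propose. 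Likewise, the concluding appeal to faithfulness of $\delta$ to exclude $0<t<1$ is not the mechanism that finishes the argument; faithfulness enters only through Proposition 12, to guarantee that the ideal is nonzero.

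The paper's resolution is a polarization trick that your sketch is missing. For the increasing approximate unit $E_\lambda=[E^\lambda_{ij}]\nearrow E=[E_{ij}]$ and a vector $\zeta\in H\otimes H_\pi$ whose only nonzero components are $\zeta_{i_0}=\zeta_{j_0}=\xi$, one computes $(E_\lambda\zeta,\zeta)=((E^\lambda_{i_0j_0}+E^{\lambda\ast}_{i_0j_0}+E^\lambda_{i_0i_0}+E^\lambda_{j_0j_0})\xi,\xi)$, which is increasing in $\lambda$ for every $\xi$; hence $E_{i_0j_0}+E_{i_0j_0}^{\ast}+\mu_{i_0i_0}I+\mu_{j_0j_0}I$ is itself the limit of an increasing net of positive elements of $A$, lies in $A^m$, is $\mathcal S^{**}$-invariant, and is therefore scalar by Proposition 2. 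Repeating with $\zeta_{i_0}=\sqrt{-1}\,\xi$, $\zeta_{j_0}=-\xi$ handles $E_{i_0j_0}-E_{i_0j_0}^{\ast}$, so every entry of $E$ is scalar. Then $E$ lies in the weak closure of $(A\otimes B(H_\pi))^{\delta\otimes ad(u^\pi)}$, hence is a scalar matrix intertwining the irreducible $u^\pi$ with itself, and being a nonzero projection it must equal $I$; this replaces your intended case analysis on $t$. If you substitute this polarization argument for your off-diagonal step, your proof becomes the paper's proof.
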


\begin{proof}
We will prove that $\overline{A_{2}^{\delta}(u^{\pi})^{\ast}A_{2}^{\delta
}(u^{\pi})}=A\otimes B(H_{\pi})^{\alpha\otimes adu^{\pi}}$ for every $\pi
\in\widehat{G}$ and the result will follow from Lemma 6. Notice first that
according to Proposition 12, $A_{1}^{\delta}(\pi)\neq(0)$ for every $\pi
\in\widehat{G}$ and hence $A_{2}^{\delta}(u^{\pi})\neq(0)$ for every $\pi
\in\widehat{G}$. Let $\pi\in\widehat{G}$ be arbitrary. Applying Remark 10, let
$E_{\lambda}=\sum_{i=1}^{n_{\lambda}}X_{i,\lambda}^{\ast}Y_{i,\lambda}$, where
$X_{i,\lambda},Y_{i,\lambda}\in A_{2}^{\delta}(u^{\pi})$, be an increasing
approximate unit of $\overline{A_{2}^{\delta}(u^{\pi})^{\ast}A_{2}^{\delta
}(u^{\pi})}$. Then $E_{\lambda}\nearrow E$ in the strong operator topology,
where $E$ is the unit of the von Neumann algebra generated by $\overline
{A_{2}^{\delta}(u^{\pi})^{\ast}A_{2}^{\delta}(u^{\pi})}$ in $A^{\ast\ast
}\otimes B(H_{\pi})$, where $A^{\ast\ast}$ is the second dual of $A$. Let $H$
be the Hilbert space of the universal representation of $A$ so that
$A^{\ast\ast}\subset B(H)$. Since every $s\in\mathcal{S}$ commutes with every
$\delta_{g},$ $g\in G,$ by Remark 3 it follows that $A_{2}^{\delta}(u^{\pi
})^{\ast}A_{2}^{\delta}(u^{\pi})$ and its weak closure in $A^{\ast\ast}\otimes
B(H_{\pi})$ is globally invariant under the automorphisms $\left\{
s^{\ast\ast}\otimes\iota|s\in\mathcal{S}\right\}  $ where $s^{\ast\ast}$ is
the double dual of $s$ and $\iota$ is the identity automorphism of $B(H_{\pi
})$. This means, in particular, that $(s^{\ast\ast}\otimes\iota)(E)=E$ for
every $s\in\mathcal{S}$. If we write $E_{\lambda}=\left[  E_{ij}^{\lambda
}\right]  $ $i,j=1,...,d_{\pi}$ as a matrix with entries in $A$ and $E=\left[
E_{ij}\right]  $, $E_{ij}\in A^{\ast\ast},$ then $s^{\ast\ast}(E_{ij})=E_{ij}$
for every $s\in\mathcal{S}$ and all $i,j$. Since $E$ is a projection, it is in
particular a positive operator which is a strong limit of elements of
$A\otimes B(H_{\pi})$ Therefore every diagonal entry, $E_{ii}$, of $E$, is a
positive operator which is the strong limit of an increasing net of positive
elements of $A$, so $E_{ii}\in A^{m}$. By Proposition 2 it follows that there
are scalars $\mu_{ii}$ such that $E_{ii}=\mu_{ii}I$ where $I$ is the unit of
$B(H)$. Now, $H\otimes H_{\pi}\simeq\oplus_{i=1}^{i=d_{\pi}}H_{i}$ where
$H_{i}=H$ for all $i=1,...d_{\pi}$ with $d_{\pi}$ the dimension of $H_{\pi}$.
Let $\zeta=\oplus\zeta_{i}\in H\otimes H_{\pi}$ with $\zeta_{i_{0}}%
=\zeta_{j_{0}}=\xi\in H$ and $\zeta_{i}=0$ if $j_{0}\neq i\neq i_{0}$. Then we have%

\begin{gather*}
(E_{\lambda}\zeta,\zeta)=(E_{i_{0}i_{0}}^{\lambda}\xi,\xi)+(E_{i_{0}j_{0}%
}^{\lambda}\xi,\xi)+(E_{i_{0}j_{0}}^{\lambda\ast}\xi,\xi)+(E_{j_{0}j_{0}%
}^{\lambda}\xi,\xi)=\\
=((E_{i_{0}j_{0}}^{\lambda}+E_{i_{0}j_{0}}^{\lambda\ast}+E_{i_{0}i_{0}%
}^{\lambda}+E_{j_{0}j_{0}}^{\lambda})\xi,\xi)
\end{gather*}

Since $E_{\lambda}\nearrow E$, it follows that $(E_{i_{0}j_{0}}^{\lambda
}+E_{i_{0}j_{0}}^{\lambda\ast}+E_{i_{0}i_{0}}^{\lambda}+E_{j_{0}j_{0}%
}^{\lambda})\nearrow(E_{i_{0}j_{0}}+E_{i_{0}j_{0}}^{\ast}+\mu_{i_{0}i_{0}%
}I+\mu_{j_{0}j_{0}}I)$ in the weak operator topology and, since $\left\{
E_{\lambda}\right\}  $ is norm bounded, in the strong operator toplogy. Hence,
$E_{i_{0}j_{0}}+E_{i_{0}j_{0}}^{\ast}+\mu_{i_{0}i_{0}}I+\mu_{j_{0}j_{0}}I\in
A^{m}$. As we noticed before, $E_{i_{0}j_{0}}+E_{i_{0}j_{0}}^{\ast}+\mu
_{i_{0}i_{0}}I+\mu_{j_{0}j_{0}}I$ is $s^{\ast\ast}$-invariant for every
$s\in\mathcal{S}$ and therefore, by Proposition 2 it is a scalar multiple of
the identity. Hence $E_{i_{0}j_{0}}+E_{i_{0}j_{0}}^{\ast}$ is a scalar
multiple of the identity.

Similarly, considering $\zeta=\oplus\zeta_{i}\in H\otimes H_{\pi}$ with
$\zeta_{i_{0}}=\sqrt{-1}\xi$, $\zeta_{j_{0}}=-\xi$, $\xi\in H$ and $\zeta
_{i}=0$ if $j_{0}\neq i\neq i_{0}$ we infer that $E_{i_{0}j_{0}}-E_{i_{0}%
j_{0}}^{\ast}$ is a scalar multiple of the identity. Hence there are scalars
$\mu_{ij}$ such that $E_{ij}=\mu_{ij}I$, so all entries of $E$ are scalar
multiples of the identity. Since $E$ is an element of the weak closure of
$A\otimes B(H_{\pi})^{\delta\otimes ad(u^{\pi})}$ it follows that $E$
intertwines $u^{\pi}$ with itself and therefore, since $u^{\pi}$ is
irreducible, we have $E=I$ and we are done.
\end{proof}

We can now prove our main result:

\begin{theorem}
Let $(A,G,\delta)$ be a dynamical system with $G$ compact. Assume that there
is a subgroup $\mathcal{S}$ of $Aut_{\delta}(A)$ which acts minimally on $A.$
If $A^{G}\subset B\subset A$ and $B$ is a $G$ and $\mathcal{S}$ globally
invariant C*-subalgebra, then $B=A^{G^{B}}$. Conversely, if $G_{0}\subset G$
is a closed, normal subgroup, then $B=A^{G_{0}}$ is a $G$ and $\mathcal{S}$
invariant C*-subalgebra such that $A^{G}\subset B\subset A$.
\end{theorem}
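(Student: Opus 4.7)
The plan is to reduce the main direction to an application of Corollary 9 by verifying that the restricted system $(B, G/G^B, \delta^\bullet)$ is saturated, and then to use Lemma 13 as the tool that provides saturation. The converse direction is a direct verification.

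For the forward direction, first I would note that since $B$ is $G$-invariant and contains $A^G$, Remark 8 applies: $G^B$ is a closed normal subgroup of $G$, and the quotient action $\delta^\bullet$ of $G/G^B$ on $B$ is faithful. Next I would restrict the minimal group $\mathcal{S}$ to $B$: since $B$ is $\mathcal{S}$-invariant by hypothesis, $\mathcal{S}|_B$ is a subgroup of $\mathrm{Aut}(B)$. Because each $s\in\mathcal{S}$ commutes with every $\delta_g$, the restriction commutes with the quotient action, so $\mathcal{S}|_B\subset \mathrm{Aut}_{\delta^\bullet}(B)$. Lemma 11 is exactly what is needed to transfer minimality to $B$: $\mathcal{S}|_B$ acts minimally on $B$.

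Now the system $(B, G/G^B, \delta^\bullet)$ has a faithful action of a compact group and a commuting minimal subgroup $\mathcal{S}|_B$, so Lemma 13 applies to give saturation. An application of Corollary 9 with $G$ replaced by $G/G^B$ (or directly, since the hypothesis of Corollary 9 is now verified) yields $B = A^{G^B}$, as required.

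For the converse, given a closed normal subgroup $G_0\subset G$, set $B = A^{G_0}$. Containment $A^G\subset A^{G_0}\subset A$ is immediate. To see $G$-invariance, for $g\in G$, $b\in A^{G_0}$, and $h\in G_0$, the identity $\delta_h(\delta_g(b)) = \delta_g(\delta_{g^{-1}hg}(b)) = \delta_g(b)$ uses normality of $G_0$. For $\mathcal{S}$-invariance, for $s\in\mathcal{S}$ and $h\in G_0$, the identity $\delta_h(s(b)) = s(\delta_h(b)) = s(b)$ uses that $s$ commutes with $\delta_h$.

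The main content is really the forward direction, and the only place where genuine work occurs is the saturation of the restricted system, which has already been carried out as Lemma 13; thus at this stage of the paper the proof is essentially a bookkeeping argument assembling Remark 8, Lemma 11, Lemma 13, and Corollary 9 in the right order. The only subtle point to be careful about is checking that the restricted $\mathcal{S}|_B$ still satisfies the commutativity hypothesis needed to invoke Lemma 13, which it does because commutation with $\delta_g$ descends to commutation with the quotient action $\delta^\bullet$.
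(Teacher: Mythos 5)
Your proof is correct and follows exactly the same route as the paper: faithfulness of the quotient action via Remark 8, minimality of $\mathcal{S}$ on $B$ via Lemma 11, saturation via Lemma 13, and the conclusion via Corollary 9, with the converse checked directly. No discrepancies.
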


\begin{proof}
It is immediate to see that the quotient action $\delta^{\bullet}$ acts
faithfully on $B$. By Lemma 11, $\mathcal{S}$ acts minimally on $B.$ By Lemma
13, the system $(B,\delta^{\bullet},G/G^{B})$ is saturated. By Corollary 1,
$B=A^{G^{B}}$ and we are done. The converse is easily checked.
\end{proof}

Notice that for W*-dynamical systems the proof of the above Theorem 14 is
simpler since the discussion about lower semicontinuous elements in the bidual
$A^{\ast\ast}$ is not necessary.

A simple example of a C*-dynamical system $(A,G,\delta)$ with $G$ compact
satisfying the hypotheses of Theorem 14 is the following:

\begin{example}
Let $G$ be a compact group and $C(G)$ the C*-algebra of continuous functions
on $G$. Denote by $\lambda$ the action of $G$ on $C(G)$ by left translations
and by $\rho$ the action by right translations. Let $H$ be a Hilbert space and
$\mathcal{K}(H)$ the algebra of compact operators on $H$. Let $A=C(G)\otimes
\mathcal{K}(H)$ and $\delta_{g}=\lambda_{g}\otimes\iota,g\in G$. Then the
subgroup $\mathcal{S}\subset Aut_{\delta}(A)$ generated by $\left\{  \rho
_{g}\otimes ad(u)|g\in G,u\in\widetilde{\mathcal{K}(H)},unitary\right\}  $
where $\rho_{g}$ is the right translation by $g\in G$, acts minimally on $A$.
Here $\widetilde{\mathcal{K}(H)}$ denotes the C*-algebra obtained from
$\mathcal{K}(H)$ by adjoining a unit if $H$ is infinite dimensional.
\end{example}

The next result provides a class of examples of C*-dynamical systems
$(A,G,\delta)$ that satisfy the hypotheses of Theorem 14.

\begin{theorem}
Let $(A,G,\delta)$ be a C*-dynamical system with $G$ compact abelian. Assume
that the fixed point algebra $A^{G}$ is simple. If $B$ is a $G$-invariant
C*-subalgebra such that $A^{G}\subset B\subset A$, then $B=A^{G^{B}}$.
\end{theorem}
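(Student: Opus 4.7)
The plan is to apply Theorem 14 by exhibiting a subgroup $\mathcal{S}\subset Aut_\delta(A)$ that acts minimally on $A$ and under which $B$ is globally invariant. I will take $\mathcal{S}$ to be generated by $\{\delta_g : g\in G\}$ together with $\{Ad(u) : u\in A^G\text{ unitary}\}$ (with unitaries of $M(A^G)$, extended to $M(A)$ via the non-degenerate inclusion from Lemma 5, in the non-unital case). Since $G$ is abelian the $\delta_g$'s pairwise commute, and since elements of $A^G$ are $\delta$-fixed each $Ad(u)$ commutes with every $\delta_g$, so $\mathcal{S}\subset Aut_\delta(A)$. The $G$-invariance of $B$ together with the inclusion $A^G\subset B$ immediately gives $\mathcal{S}$-invariance of $B$.

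The remaining task is to prove minimality of $\mathcal{S}$ on $A$. Let $C$ be any non-zero $\mathcal{S}$-invariant hereditary C*-subalgebra of $A$. Since $\delta_g\in\mathcal{S}$, $C$ is $G$-invariant, and the faithful conditional expectation $E(a)=\int_G \delta_g(a)\,dg$ maps $C$ into $C$; hence $0\neq E(c^*c)\in C\cap A^G$ whenever $0\neq c\in C$, so $D:=C\cap A^G$ is non-zero. The crux is then to upgrade the $Ad(u)$-invariance of the hereditary subalgebra $D\subset A^G$ to the statement that $D$ is a two-sided ideal of $A^G$, so that simplicity of $A^G$ forces $D=A^G$.

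I expect this upgrade to be the main obstacle. To carry it out, fix $a\in D$ and $u\in A^G$ unitary. Then $(ua)(ua)^*=u(aa^*)u^*\in D$ by $Ad(u)$-invariance, and $(ua)^*(ua)=a^*a\in D$ since $D$ is a $*$-subalgebra. Writing $D=\{x\in A^G : x=pxp\}$ for the open projection $p\in(A^G)^{**}$, the conditions $yy^*\in D$ and $y^*y\in D$ force $(1-p)y=0$ and $y(1-p)=0$ respectively, whence $y=pyp\in D$; applied to $y=ua$ and to $y=au$ this gives $ua, au\in D$. The Russo-Dye theorem then extends this stability to multiplication by arbitrary elements of $A^G$, so $D$ is a closed two-sided ideal, and simplicity of $A^G$ yields $D=A^G$. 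Once $A^G\subset C$, Lemma 5 places an approximate unit of $A$ inside the hereditary subalgebra $C$, forcing $C=A$. With minimality of $\mathcal{S}$ in hand, Theorem 14 delivers $B=A^{G^B}$, and the converse is immediate.
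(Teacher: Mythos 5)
Your proposal is correct and follows the paper's overall strategy exactly: you build the same subgroup $\mathcal{S}$ (generated by $\delta_{G}$ and $ad(u)$ for unitaries over $A^{G}$), verify $\mathcal{S}\subset Aut_{\delta}(A)$ and the $\mathcal{S}$-invariance of $B$, prove minimality of $\mathcal{S}$ from the simplicity of $A^{G}$, and then invoke Theorem 14. The one place you diverge is the verification that simplicity forces an $\mathcal{S}$-invariant hereditary subalgebra $C$ to be all of $A$. The paper passes to the closed left ideal $L=\overline{AC}$ (with $C=L\cap L^{\ast}$) and averages over $G$ to get $L^{G}$, which is automatically a left ideal of $A^{G}$; then the single line $L^{G}u=u\,ad(u^{\ast})(L^{G})\subset L^{G}$ makes it a two-sided ideal, so $L^{G}=A^{G}$ and Lemma 5 finishes. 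You instead work directly with the hereditary subalgebra $D=C\cap A^{G}$ and upgrade it to an ideal via the open-projection characterization ($yy^{\ast}\in D$ and $y^{\ast}y\in D$ imply $y=pyp\in D$, applied to $y=ua$ and $y=au$) followed by Russo--Dye. Both arguments are valid; the paper's left-ideal device is a bit slicker because the "left ideal" half comes for free from averaging, whereas your route needs the bidual/open-projection machinery and the Russo--Dye theorem to recover two-sided stability. Your version has the minor advantage of never leaving the hereditary subalgebra $C$ itself.
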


\begin{proof}
Denote by $\widetilde{A^{G}}$ the C*-algebra obtained from $A^{G}$ by
adjoining a unit. We will show that the subgroup $\mathcal{S}\subset Aut(A)$
generated by $\delta_{G}=\left\{  \delta_{g}|g\in G\right\}  $ and $\left\{
ad(u)|u\in\widetilde{A^{G}},unitary\right\}  $ is minimal. Since $G$ is
abelian and $ad(u)$, $u\in\widetilde{A^{G}}$ commute with $\delta_{g},g\in G$,
we have that $\mathcal{S}\subset Aut_{\delta}(A)$. We prove next that
$\mathcal{S}$ acts minimally on $A$. Let $C\in\mathcal{H}^{\mathcal{S}}(A)$.
Then if $L=\overline{AC}$, we have that $L$ is an $\mathcal{S}$-invariant, in
particular $G$-invariant, closed, left ideal of $A$ and $C=L\cap L^{\ast}$. We
show that $L=A$ and hence $C=A$. Since $L$ is $\mathcal{S}$-invariant, it
follows that $L^{G}=\left\{  \int_{G}\delta_{g}(l)dg|l\in L\right\}  \subset
L\cap A^{G}$ is a left ideal of $A^{G}$. Since $L$ is $ad(u)$-invariant for
every $u\in\widetilde{A^{G}}$, unitary, we have:%
\[
L^{G}u=uad(u^{\ast})(L^{G})\subset L^{G},u\in\widetilde{A^{G}}\text{ unitary.}%
\]
Therefore $L^{G}$ is a two sided ideal of $A^{G}$. Since $A^{G}$ is simple, it
follows that $L^{G}=A^{G}$ and thus by Lemma 5, $L^{G}$ and so $L$ contains an
approximate unit of $A$. Hence $L=A$ and therefore $C=L\cap L^{\ast}=A$.
Therefore $\mathcal{S}\subset Aut_{\delta}(A)$ is minimal and the conclusion
follows from Theorem 14.
\end{proof}

An example of C*-dynamical system satisfying the hypotheses of Theorem 16 can
be constructed as follows:

\begin{example}
Let $(C,G,\lambda)$ be a C*-dynamical system with $G$ compact abelian. Assume
that $\lambda$ is weakly ergodic, and therefore minimal, by [[9], Corollary
2.7.]. Let $H$ be a Hilbert space. Let $A=C\otimes\mathcal{K}(H)$, where
$\mathcal{K}(H)$ is the algebra of compact operators on $H$ and $\delta
_{g}=\lambda_{g}\otimes\iota,g\in G$ where $\iota$ is the trivial automorphism
of $\mathcal{K}(H)$. Then $(A,G,\delta)$ satisfies the hypotheses of Theorem 16.
\end{example}

\begin{proof}
Straightforward.
\end{proof}

The next example shows that the conclusion of Theorem 14 may fail if the
minimality condition on $\mathcal{S}$ is replaced with a weaker ergodicity
condition such as topological transitivity, or even with strong topological transitivity.

\begin{example}
Let $G$ be a compact abelian group and $H$ an infinit dimensional Hilbert
space. Denote by $\tau$ the action of $G$, by translations, on $C(G)$, the
C*-algebra of continuous functions on $G$. Let $A=C(G)\otimes\widetilde
{\mathcal{K}(H)}$, where $\widetilde{\mathcal{K}(H)}$ is the subalgebra of
$B(H)$ generated by $\mathcal{K}(H)$ and the unit $I\in B(H)$. Let $\delta
_{g}=\tau_{g}\otimes\iota,g\in G$, where $\iota$ is the identity automorphism
of $\mathcal{K}(H)$. Consider the system $(A,G,\delta)$. Clearly,
$A^{G}=I_{C(G)}\otimes\widetilde{\mathcal{K}(H)}$. We will prove the following
two facts:\newline i) $Aut_{\delta}(A)$ contains a subgroup $\mathcal{S}$
which acts strongly topologically transitively on $A$\newline ii) There is a
$G$ and $\mathcal{S}$-invariant C*subalgebra $B$ such that $A^{G}\subset
B\subset A$ and $B\neq A^{G^{B}}$.
\end{example}

\begin{proof}
i) Let $\mathcal{S}=\left\{  \tau_{g}\otimes ad(u)|g\in G,u\in\widetilde
{\mathcal{K}(H)},unitary\right\}  \subset Aut(A)$. Obviously, every element
$s\in\mathcal{S}$ commutes with all $\delta_{g}=\tau_{g}\otimes\iota,g\in G$.
We prove next that $\mathcal{S}$ acts ergodically on the von Neumann algebra
$L^{\infty}(G)\otimes B(H)$ and then applying [[9], Theorem 2.2. i)]
(respectively, [[3], Corollary 5.3.]) it will follow that $\mathcal{S}$ acts
topologically transitively (strongly topologically transitively) on $A$.
Notice first that $\tau_{g}$ is implemented by the unitary operator
$\lambda_{g}\in B(L^{2}(G))$ of translation by $g$. Hence the fixed point
algebra $(B(L^{2}(G))\otimes B(H))^{\mathcal{S}}$ is the commutant $(C^{\ast
}(G)^{^{\prime\prime}}\otimes B(H))^{^{\prime}}=C^{\ast}(G)^{^{\prime\prime}%
}\otimes\mathcal{C}I$ where $C^{\ast}(G)$ is the group C*-algebra of $G$.
Since $C^{\ast}(G)^{^{\prime\prime}}\cap L^{\infty}(G)=\mathcal{C}I$, i) is
proven.\newline ii) Let $B\subset A$ be the C*-subalgebra generated by
$C(G)\otimes\mathcal{K}(H)$ and $I_{C(G)}\otimes I_{B(H)}$. Then, $B$ is
obviously $G$ and $\mathcal{S}$ invariant and $A^{G}=I_{C(G)}\otimes
\widetilde{\mathcal{K}(H)}\subset B$. Clearly, $G^{B}=\left\{  g\in
G|\delta_{g}(b)=b,b\in B\right\}  =\left\{  e\right\}  $ where $e$ is the
identity element of $G$. If we show that $B\neq A$, ii) is proven. Let $f\in
C(G)$ be a non constant function. Then there are $g_{1},g_{2}\in G$ such that
$f(g_{1})\neq f(g_{2})$. We claim that $f\otimes I\notin B$. Assume to the
contrary that $f\in B$. then there is a function $\Phi:G\rightarrow
\mathcal{K}(H)$ and a scalar $\mu$ such that $f(g)\otimes I=\Phi(g)+\mu I$. In
particular $(f(g_{1})-f(g_{2}))I=\Phi(g_{1})-\Phi(g_{2})\in\mathcal{K}(H)$,
which is a contradiction since $f(g_{1})-f(g_{2})$ is a non zero scalar.
\end{proof}

\bigskip

\bigskip

{\Large Acknowledgement}

\bigskip

The author would like to thank the referee for several suggestions that
contributed to an improved presentation of this paper.

\bigskip

\bigskip

{\Large References}

\bigskip

1. D. Avitzour, Noncommutative topological dynamics I, Trans. Amer. Math. Soc.
282 (1984), 109-119.

2. O. Bratteli, G. A. Elliott and D. W. Robinson, Strong topological
transitivity and C*-dynamical systems, J. Math. Soc. Japan 37 (1985), 115-133.

3. O. Bratteli, G. A. Elliott, D. E. Evans and A Kishimoto, Quasi-product
actions of a compact abelian group on a C*-algebra, Tohoku Math. J. 41 (1989), 133-161.

4. J. Dixmier, Les C*-alg\`{e}bres et leurs repr\'{e}sentations,
Gauthier-Villars, Paris, 1964.

5. R. Dumitru and C. Peligrad, Compact quantum group actions on C*-algebras
and invariant derivations, Proc. Amer. Math. Soc. 135 (2007), 3977-3984.

6. M. Izumi, R. Longo and S. Popa, A Galois correspondence for compact groups
of automorphisms of von Neumann algebras with a generalization to Kac
algebras, J. Funct. Anal. 155 (1998),25-63.

7. A. Kishimoto, Remarks on compact automorphism group of a certain von
Neumann algebra, Publ. R.I.M.S., Kyoto Univ., 13 (1977), 573-581.

8. M. B. Landstad, Algebras of spherical functions associated with a covariant
system over a compact group, Math. Scand. 47 (1980), 137-149.

9. R. Longo and C. Peligrad, Noncommutative topological dynamcs and compact
actions on C*-algebras, J. Funct. Anal. 58 (1984), 157-174.

10. G. K. Pedersen, C*-algebras and their automorphism groups, Academic Press,
London New York San Francisco, 1979.

11. C. Peligrad, Locally compact group actions on C*-algebras and compact
subgroups, J. Funct. Anal. 76 (1988), 126-139.

12. C. Peligrad, Compact actions commuting with ergodic actions and
applications to crossed products, Trans. Amer. Math. Soc., 331 (1992), 825-836.

13. N. C. Phillips, Equivariant K-theory and freeness of group actions on
C*-algebras, Lecture Notes in Mathematics, Springer-Verlag, Berlin Heidelberg
New York London Paris Tokyo, 1274 (1987).

14. J. E. Roberts, Cross products of a von Neumann algebra by group duals,
Symposia Math., 20 (1976), 335-363.

15. M. Takesaki, Fourier analysis of compact automorphism groups (an
application of Tannaka duality theorem), in Coll. Internat. du CNRS 274 (1979).

16. R. Tomatsu, A Galois correspondence for compact quantum group actions,
J.reine angew. Math. 633 (2009), 165-182.

\end{document}